\patchcmd{\section}{\scshape}{\bfseries}{}{}
\newenvironment{nouppercase}{%
  \renewcommand{\uppercasenonmath}[1]{}}{}
\newcommand{\R}{{\mathbb{R}}}
\newcommand{\Z}{{\mathbb{Z}}}
\newtheorem{theorem}{Theorem}
\newtheorem{lemma}[theorem]{Lemma}
\newtheorem{corollary}[theorem]{Corollary}
\theoremstyle{definition}
\theoremstyle{remark}
\newcommand{\Modulus}[1]{\left| #1 \right|}
\renewcommand{\epsilon}{\varepsilon}
\begin{document}
\title[~]
{Degree bound of P\'olya Positivstellensatz}
\author[~]
{Tan Ze Kang}
\dedicatory{Mentor: Colin Tan}



\begin{nouppercase}
\maketitle
\end{nouppercase}

\begin{abstract}
  P\'olya's Positivstellensatz on the $1$-simplex says that if $P(x)$ is a real polynomial such that $P(x)>0$ whenever $x \ge 0$,
then all the coefficients of $(1+x)^mP(x)$ are positive whenever $m$ is large.
Powers-Reznick gave a complexity estimate for P\'olya's Positivstellensatz. Namely, they proved that, for such $P(x)$ of degree $d$, all the coefficients of $(1+x)^mP(x)$ are positive whenever $m > \frac{1}{2} (d^2 -d) \frac{L(P)}{\lambda(P)} - d$. where $\frac{L(P)}{\lambda(P)}$ is an invariant of $P(x)$. For $d=3$ and $d=4$ specifically, we improve Powers-Reznick's bound by showing $m > \frac{3}{2} \frac{L(P)}{\lambda(P)} - 1$ for $d=3$ and $ m > \frac{4232}{2505} \frac{L(P)}{\lambda(P)} - 1$ for $d=4$.
\end{abstract}

\section{Introduction and main result}\label{intro}
A positivstellensatz certifies the strict positivity of a polynomial $f \in \R[x] := \R[x_1, \ldots, x_n]$ on a semialgebraic set $k \subseteq R^n$ by representing $f$ as an algebraic expression. This algebraic expression of $f$ witnesses the strict positivity of $f$ on $k$.
P\'olya proved a Positivstellensatz for real homogeneous polynomials on the $n$-simplex \cite{Polya28} (reproduced in \cite[pp. 57-60]{HLP}).
For the $1$-simplex an equivalent formulation is that if $P(x) \in \R[x]$ is a polynomial such that $P(x)>0$ whenever $x \ge 0$,
then all the coefficients of $(1+x)^mP(x)$ are positive whenever $m$ is large (see also \cite[corollary 5]{Powers}).
In this case, a positivstellensatz certifies that $P(x)\in \R[x]$ on a nonnegative real line by representing $P(x)$ as an algebraic expression : $(1+x)^mP(x)$ has positive coefficient.

Powers-Reznick obtained an upper bound for the least $m$ such that all the coefficients of $(1+x)^mP(x)$ are positive \cite{Reznick}. For a polynomial $P(x) = \sum_{j = 0}^d a_j x^j \in \R[x]$ of degree $d$ such that $P(x) > 0$ whenever $x \ge 0$, they showed that all the coefficients of $(1+x)^mP(x)$ are positive whenever 
\begin{equation} \label{reznick bound}
    m  > \frac{1}{2}(d^2 - d) \frac{L(P)}{\lambda(P)} - d,
\end{equation}
where
\begin{equation} \label{eq: LPAndLambdaP}
    L(P) := \max_{j = 0,\ldots, d} \frac{1}{\binom{d}{j}}|a_j| 
\quad \text{and} \quad
    \lambda(P) := \inf_{x \in [0, \infty)} \frac{P(x)}{(1+x)^d},
\end{equation}
We remark that $\lambda(P)$ is a well-defined positive real number for such $P(x)$ since $P(x)$ is a polynomial of degree $d$ with positive leading coefficient so that $\lim_{x \to \infty} \frac{P(x)}{(1+x)^d}$ is a positive real number. 
Next, we shall show 
\begin{equation} \label{inequality}
L(P)\ge\lambda(P).
\end{equation}
Note that 
\[
L(P)\ge \max_{j = 0,\ldots, d} \frac{1}{\binom{d}{j}}a_j  
\quad \text{and} \quad 
\sup_{x \in [0, \infty)} \frac{\sum_{j=0}^d a_jx^j}{(1+x)^d} \ge \lambda(P)
\]
Hence it suffices to show that $\max_{j = 0,\ldots, d} \frac{1}{\binom{d}{j}}a_j \ge \sup_{x \in [0, \infty)}\frac{\sum_{j=0}^d a_jx^j}{(1+x)^d}$.
Let $A = \max_{j = 0,\ldots, d} \frac{1}{\binom{d}{j}}a_j $, we know that $A\ge \frac{1}{\binom{d}{j}}a_j$.
$\frac{\sum_{j=0}^d a_jx^j}{(1+x)^d} \le \frac{\sum_{j=0}^d \binom{d}{j}Ax^j}{(1+x)^d} = \frac{(1+x)^d}{(1+x)^d} A$.
Hence $\frac{\sum_{j=0}^d a_jx^j}{(1+x)^d} \le  \max_{j = 0,\ldots, d} \frac{1}{\binom{d}{j}}a_j$. 
This proves \eqref{inequality} and the equality holds when $ \sup_{x \in [0, \infty)}\frac{\sum_{j=0}^d a_jx^j}{(1+x)^d} = \lambda(P)$, which occurs only when $P(x)= b \cdot (1 + x)^d$ for some $b$.

In this report we improve Powers-Reznick's degree bound for $d =3$ and $d=4$.            
\begin{theorem}\label{mainthm}
Let $P(x) \in \R[x]$ be a polynomial of degree $d$ such that $P(x)>0$ whenever $x \ge 0$.
For $d=1,\ldots, 4$, all the coefficients of $(1+x)^mP(x)$ are positive whenever
\begin{equation} \label{our bound}
m > C_d \frac{L(P)}{ \lambda(P)} - 1,
\end{equation}
where
$C_d$ is given in the table below
\end{theorem}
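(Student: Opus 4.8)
The plan is to recast the problem as a positivity statement for Bernstein coefficients and then to solve the resulting extremal problem separately for each $d\le4$. Because $L(P)$ and $\lambda(P)$ are both homogeneous of degree $1$ in $P$, while positivity of the coefficients of $(1+x)^mP(x)$ is scale invariant, I may normalise $\lambda(P)=1$, so that $P(x)\ge(1+x)^d$ on $[0,\infty)$ and $L(P)=\max_j|a_j|/\binom dj$. Writing $(1+x)^mP(x)=\sum_k c_kx^k$ one has $c_k=\sum_{j=0}^d a_j\binom m{k-j}$; put $n:=m+d$, $\alpha_j:=a_j/\binom dj$ and $w_{k,j}:=\binom kj\binom{n-k}{d-j}/\binom nd$. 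Dividing by $\binom nk$ yields
\begin{equation*}
\frac{c_k}{\binom nk}=\frac{1}{n^{\underline d}}\sum_{j=0}^d a_j\,k^{\underline j}(n-k)^{\underline{d-j}}=\sum_{j=0}^d \alpha_j\,w_{k,j},
\end{equation*}
i.e.\ $c_k/\binom nk$ is the $k$-th Bernstein coefficient at degree $n$ of $Q(s):=\sum_{j=0}^d a_js^j(1-s)^{d-j}$, whose degree-$d$ Bernstein coefficients are the $\alpha_j$; via $x=s/(1-s)$ one checks $L(P)=\max_j|\alpha_j|$ and $\lambda(P)=\min_{[0,1]}Q$. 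Thus Theorem~\ref{mainthm} is equivalent to: every degree-$n$ Bernstein coefficient of $Q$ is positive once $n-(d-1)>C_d\,L(P)/\lambda(P)$.

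Since the $w_{k,j}$ are a hypergeometric distribution in $j$, $c_k/\binom nk$ is a weighted average of the $\alpha_j$; comparing with the binomial weights $v_{k,j}:=\binom dj t^j(1-t)^{d-j}$, $t:=k/n$, which average to $Q(t)$, and using $\sum_j(w_{k,j}-v_{k,j})=0$ together with $|\alpha_j|\le L(P)$ gives
\begin{equation*}
\frac{c_k}{\binom nk}\ \ge\ Q(t)-L(P)\,\varepsilon_d(k,n)\ \ge\ \lambda(P)-L(P)\,\varepsilon_d(k,n),\qquad \varepsilon_d(k,n):=\textstyle\sum_{j}|w_{k,j}-v_{k,j}|,
\end{equation*}
where, expanding $k^{\underline j}=k^j-\binom j2 k^{j-1}+\cdots$, the quantity $\varepsilon_d$ is an explicit rational function of $(k,n)$ of size $\bigO_d(1/n)$ uniformly in $k$. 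For $d=1$, $w_{k,j}\equiv v_{k,j}$, so $\varepsilon_1\equiv0$ and $C_1=0$; for $d=2$ one computes $\varepsilon_2(k,n)=4t(1-t)/(n-1)\le1/(n-1)$, so $n-1>L(P)/\lambda(P)$ suffices and $C_2=1$. For $d=3,4$ the estimate $|\sum_j\alpha_j(w_{k,j}-v_{k,j})|\le L(P)\varepsilon_d$ is too lossy — the $\alpha$-vector making it sharp forces $\lambda(P)\to0$ — so I would instead analyse the true extremal problem
\begin{equation*}
C_d=\sup\Big\{(m+1)\,\tfrac{\lambda(P)}{L(P)}\ :\ \deg P\le d,\ P>0\text{ on }[0,\infty),\ m\ge0,\ (1+x)^mP(x)\text{ has a nonpositive coefficient}\Big\}.
\end{equation*}
The symmetry $P(x)\leftrightarrow x^dP(1/x)$ (coefficient reversal, preserving $L$ and $\lambda$ and reversing $c_0,\dots,c_n$) lets me assume $t=k/n\le\tfrac12$, and then the supremum is a bounded finite-dimensional optimisation in $\alpha_0,\dots,\alpha_d$ and $t$.

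I would then carry out this optimisation for $d=3$ and $d=4$. For $d=3$ the extremiser should be a polynomial one of whose endpoint Bernstein coefficients degenerates, so that $Q$ is nearly tangent to the $s$-axis at an endpoint; writing down the stationarity and active-constraint equations should give $C_3=\tfrac32$, replacing Powers--Reznick's $3$. For $d=4$ the same scheme produces a Karush--Kuhn--Tucker system with several branches, indexed by how many $\alpha_j$ sit at the active bound $\pm L(P)$ and by whether $\min_{[0,1]}Q$ is attained in the interior or at an endpoint; on the relevant branch, eliminating variables yields an algebraic number which I expect to be $C_4=\tfrac{4232}{2505}$. I anticipate the main obstacle to be precisely this $d=4$ analysis: singling out the correct branch among the competitors and proving that its value is the \emph{global} supremum rather than merely a local extremum is where the real work — and the origin of the unusual constant $\tfrac{4232}{2505}$ — lies.
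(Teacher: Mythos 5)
Your setup is, after a change of variable, identical to the paper's: with $n=m+d$ and $t=k/n=c$, one has $w_{k,j}/\binom dj=f_c^{(j)}(\tfrac1n)$ and $v_{k,j}/\binom dj=f_c^{(j)}(0)$, so your $\varepsilon_d(k,n)=\sum_j|w_{k,j}-v_{k,j}|$ is exactly the paper's quantity $\sum_{j=0}^d\binom dj\bigl|f_c^{(j)}(\tfrac1{m+d})-f_c^{(j)}(0)\bigr|$, and your inequality $c_k/\binom nk\ge\lambda(P)-L(P)\varepsilon_d$ is exactly the paper's application of Lemma~\ref{lem:identity}. Your treatment of $d=1,2$ (with $\varepsilon_1\equiv0$ and $\varepsilon_2=4t(1-t)/(n-1)\le1/(m+1)$) is also correct and matches \eqref{reznick bound}.

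The gap is in the middle paragraph, where you abandon this line on the grounds that ``$|\sum_j\alpha_j(w_{k,j}-v_{k,j})|\le L(P)\varepsilon_d$ is too lossy'' for $d=3,4$. That assessment is mistaken: the paper proves precisely $\varepsilon_3(k,n)\le\tfrac32\cdot\tfrac1{m+1}$ and $\varepsilon_4(k,n)\le\tfrac{4232}{2505}\cdot\tfrac1{m+1}$ (Lemma~\ref{limit} and Corollary~\ref{c>1/2}), by a sign analysis of the $h_c^{(j)}(m)$ appearing in \eqref{fcj3}, and these are exactly the stated constants $C_3,C_4$. So the triangle-inequality route already produces Theorem~\ref{mainthm}; nothing sharper is required. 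What you propose instead --- characterising $C_d$ by a KKT-type extremal problem over $(\alpha_0,\dots,\alpha_d,t)$ and ``expecting'' the relevant branch to yield $\tfrac32$ and $\tfrac{4232}{2505}$ --- is left entirely unexecuted, and this optimisation is not obviously finite-dimensional or bounded as written (one must also optimise over $m$, and the constraint ``$(1+x)^mP(x)$ has a nonpositive coefficient'' is a disjunction over $k$). As it stands your proposal therefore establishes only $d\le2$; for $d=3,4$ you need to return to the estimate you discarded and actually bound $\varepsilon_d(k,n)$, as the paper does, or genuinely carry out the extremal analysis you sketch and verify it gives a valid global upper bound on $(m+1)\lambda/L$ at the threshold.
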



\begin{table}[h!] \label{improved}
\begin{center}
\begin{tabular}{ c | c c c c }
 $d$ & $1$ & $2$ & $3$ & $4$  \\ 
 \hline
 $C_d$ & $0$ & $1$ & $\frac{3}{2}$ & $\frac{4232}{2505}$  \\  
 $\frac{d^2-d}{2}$ & $0$ & $1$ & $3$ & $6$  \\ 
\end{tabular}
\caption{Improvement in degree bounds}
\end{center}
\end{table}

Comparing \eqref{reznick bound} and \eqref{our bound}, when $C_d\frac{L(P)}{\lambda(P)} -1 \le \frac{1}{2}(d^2 -d)\frac{L(P)}{\lambda(P)} -d $ holds, we improved Powers-Reznick's bound. 
For $d=3$, the above inequality holds whenever $\frac{L(P)}{\lambda(P)} \ge \frac{4}{3}$. For $d=4$, the inequality holds by \eqref{inequality} for all $P(x)$ such that $P(x)>0$ whenever $x\ge 0$ .

To prove that $(1+x)^mP(x)$ have positive coefficients, we use the equivalent condition:
\begin{equation} \label{eq: equivalent criterion}
[x^{c(m+d)}](1+x)^mP(x)>0 \text{ whenever } c(m+d) \text{ is a non-negative integer for } 0\le c\le 1.
\end{equation}
P\'olya chose $\frac{m!(m+d)^d}{(c(m+d))!}(1-c)^dP(\frac{c}{1-c})$ to approximate $[x^{c(m+d)}](1+x)^mP(x)$. When this approximator is used, the error $[x^{c(m+d)}](1+x)^mP(x) - \frac{m!(m+d)^d}{(c(m+d))!}(1-c)^dP(\frac{c}{1-c})$ has a sign independent of $P(x)$.
We choose our approximator to be $\binom{m+d}{c(m+d)}(1-c)^dP(\frac{c}{1-c})$ instead such that the error is $0$ when $P(x) = (1+x)^d$.

\section{Proof of theorem \ref{mainthm}}
\begin{lemma}\label{lem:identity}
Let $P(x) = \sum_{j = 0}^d a_j x^j\in \R[x]$ be a polynomial of degree $d$. 
For each $m \in \Z_{\ge 0}$ and each $0 \le c \le 1$ such that $c(m + d) \in \Z_{\ge 0}$,
 \begin{align*}
     &[x^{c(m+d)}](1+x)^mP(x) - \binom{m+d}{c(m+d)}(1-c)^dP(\frac{c}{1-c})\\
        = {} & \binom{m+d}{c(m+d)}\sum_{j=0}^d a_j (f_{c}^{(j)}(\frac{1}{m+d})-f_{c}^{(j)}(0)),
 \end{align*}
where 
\begin{equation} \label{eq: fcx}
f_{c}^{(j)}(x):= \frac{\left(c)(c - x) \cdots (c - (j - 1)x \right)
                \left(1 - c)(1 - c - x) \cdots (1 - c - (d - j - 1)x\right)}
                    {(1)(1 - x)(1 - 2x) \cdots (1 - (d- 1)x)}.
\end{equation}

\end{lemma}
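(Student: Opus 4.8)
The plan is to use $\R$-linearity in the coefficients $a_0,\ldots,a_d$ to reduce to a single monomial, and then to evaluate $f_c^{(j)}$ at $x = 1/(m+d)$ by a direct computation. Both sides of the claimed identity depend $\R$-linearly on $(a_0,\ldots,a_d)$: the right-hand side manifestly, and the left-hand side because $[x^{c(m+d)}](1+x)^m(\,\cdot\,)$ is linear while $(1-c)^dP(\tfrac{c}{1-c})$ is to be understood as the polynomial $\sum_{j=0}^d a_j c^j(1-c)^{d-j}$ in $c$ (this is the only sensible reading at $c=1$, and it agrees with substitution for $c<1$). So it suffices to prove the identity for $P(x)=x^j$, $0\le j\le d$. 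For such $P$ we have $(1-c)^dP(\tfrac{c}{1-c}) = c^j(1-c)^{d-j}$, which is precisely $f_c^{(j)}(0)$ by \eqref{eq: fcx}, and the assertion reduces to
\begin{equation*}
  [x^{c(m+d)}](1+x)^m x^j = \binom{m+d}{c(m+d)}\, f_c^{(j)}\!\left(\tfrac{1}{m+d}\right).
\end{equation*}

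Next I would abbreviate $N := m+d$ and $k := c(m+d) = cN \in \Z_{\ge 0}$. The left-hand side is then $[x^{k-j}](1+x)^m = \binom{m}{k-j}$, with the convention that this is $0$ unless $0 \le k-j \le m$. For the right-hand side, substitute $x = 1/N$ into \eqref{eq: fcx}, writing $c = k/N$ and $1-c = (N-k)/N$ so that each linear factor becomes an integer over $N$: the first numerator product is $N^{-j}\prod_{i=0}^{j-1}(k-i) = N^{-j}\,k!/(k-j)!$; the second is $N^{-(d-j)}\prod_{i=0}^{d-j-1}(N-k-i) = N^{-(d-j)}(N-k)!/(m-k+j)!$, using $N-k-(d-j) = m-k+j$; and the denominator is $N^{-d}\prod_{i=0}^{d-1}(N-i) = N^{-d}N!/m!$. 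The powers of $N$ cancel since $N^{-j}N^{-(d-j)}/N^{-d}=1$, so
\begin{equation*}
  f_c^{(j)}\!\left(\tfrac{1}{m+d}\right) = \frac{k!\,(N-k)!\,m!}{(k-j)!\,(m-k+j)!\,N!},
\end{equation*}
and multiplying by $\binom{N}{k} = N!/\big(k!(N-k)!\big)$ gives $m!/\big((k-j)!(m-k+j)!\big) = \binom{m}{k-j}$, as required.

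Finally I would treat the degenerate cases in which $\binom{m}{k-j}=0$, since the factorial manipulation above implicitly assumed all indices were in range. If $k<j$, the factor $c - kx$ vanishes at $x=1/N$ in the first numerator product; if $k > m+j$, then $N-k < d-j$ and the factor $1-c-(N-k)x$ vanishes in the second; in both cases $f_c^{(j)}(1/N)=0$, consistent with the left-hand side. The denominator $\prod_{i=0}^{d-1}(1-i/N)$ is strictly positive because $N=m+d>d-1$, so $f_c^{(j)}(\tfrac{1}{m+d})$ is in any case well-defined. The whole proof is a bounded computation; the only points needing care are the index bookkeeping in the second numerator product — in particular the identity $N-k-(d-j) = m-k+j$ — and the verification of these boundary cases, so that is where I would expect any friction.
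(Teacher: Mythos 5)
Your proof is correct and follows essentially the same route as the paper's: reduce by linearity to $P(x)=x^j$ and establish the binomial identity $\binom{m}{c(m+d)-j} = \binom{m+d}{c(m+d)}\,f_c^{(j)}(\tfrac{1}{m+d})$ together with $f_c^{(j)}(0)=c^j(1-c)^{d-j}$. The only difference is one of thoroughness: the paper asserts that identity without computation, while you carry out the factorial simplification explicitly and also check the out-of-range cases where $\binom{m}{c(m+d)-j}=0$ (verifying a numerator factor of $f_c^{(j)}$ vanishes), which the paper leaves implicit.
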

\begin{proof}

Since $[x^{c(m+d)}](1+x)^mP(x) = \sum_{j=0}^d a_j\binom{m}{c(m+d)-j}$ and
$
    \binom{m}{c(m+d)-j} 
     =\binom{m+d}{c(m+d)}f_c^{(j)}(\frac{1}{m + d}),
$ for $0\le j\le d$, 
hence
\begin{equation}\label{coefficient of (1+x)^mPx}
[x^{c(m+d)}](1+x)^mP(x) = \binom{m+d}{c(m+d)}\sum_{j=0}^d a_j f_c^{(j)}(\frac{1}{m+d}).
\end{equation}
Combining \eqref{coefficient of (1+x)^mPx} with $(1-c)^dP(\frac{c}{1-c})=\sum_{j=0}^d a_j c^j(1-c)^{d-j}=f_c^{(j)}(0)$, we will prove lemma \ref{lem:identity}. 
\end{proof}

From \eqref{eq: fcx}, 
\begin{equation}\label{fcj3}
f_c^{(j)}(\frac{1}{m+d})-f_c^{(j)}(0)= \frac{c(1-c)}{(m+1)(m+2)\cdots (m + d - 1)}h_c^{(j)}(m),  
\end{equation}
for some polynomial $h_c^{(j)}(m)$ of degree $d-2$ in $m$ and of degree $d - 2$ in $c$.
 From \eqref{eq: fcx}, by substituting $c$ with $1-c$, and $d$ wtih $d-j$, we can obtain $f_c^{(j)}(x) = f_{1-c}^{(d-j)}(x)$. Hence, from \eqref{fcj3}, we can show that
\begin{equation} \label{eq: symmetry}
    h_c^{(j)}(m) = h_{1-c}^{d - j}(m)
\end{equation}

\begin{lemma}\label{limit}
For $d=3$ or $4$, let $f_c^{(j)}(m)$ be as in \eqref{eq: fcx}. 
For each $m\ge 1$ and $0 \le c \le \frac{1}{2}$,
\[
\sum_{j = 0}^d \binom{d}{j}|f_c^{(j)}(\frac{1}{m+d})-f_c^{(j)}(0)|\le \frac{C_d}{m + 1},
\]
where $C_3=\frac{3}{2}$ and $C_4 = \frac{4232}{2505}$ as given in table \ref{improved}.
\end{lemma}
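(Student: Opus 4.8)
The plan is to turn the estimate into a short finite list of elementary polynomial inequalities. By \eqref{fcj3} the quantity to be bounded equals
\[
\sum_{j=0}^d \binom{d}{j}\Bigl|f_c^{(j)}\bigl(\tfrac1{m+d}\bigr)-f_c^{(j)}(0)\Bigr|
=\frac{c(1-c)}{(m+1)(m+2)\cdots(m+d-1)}\sum_{j=0}^d\binom{d}{j}\bigl|h_c^{(j)}(m)\bigr| ,
\]
so Lemma~\ref{limit} is equivalent to
\[
c(1-c)\sum_{j=0}^d\binom{d}{j}\bigl|h_c^{(j)}(m)\bigr|\;\le\;C_d\,(m+2)(m+3)\cdots(m+d-1)
\qquad(m\ge1,\ 0\le c\le\tfrac12),
\]
an inequality between two quantities which, for each fixed $c$, are piecewise polynomial in $m$ of degree $d-2$.

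First I would compute the polynomials $h_c^{(j)}(m)$ explicitly from \eqref{eq: fcx} and \eqref{fcj3}. The symmetry \eqref{eq: symmetry} halves the work: for $d=3$ it is enough to find $h_c^{(0)}(m)=-3(1-c)m+7c-5$ and $h_c^{(1)}(m)=(2-3c)m+4-7c$, since then $h_c^{(3)}(m)=h_{1-c}^{(0)}(m)$ and $h_c^{(2)}(m)=h_{1-c}^{(1)}(m)$; for $d=4$ one needs $h_c^{(0)},h_c^{(1)},h_c^{(2)}$ (the last self-symmetric), now quadratic in $m$ with coefficients quadratic in $c$.

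Next, for fixed $m\ge1$ I would record the sign of each $h_c^{(j)}(m)$ as $c$ runs over $[0,\tfrac12]$. Some of them keep a fixed sign throughout (for $d=3$, $h_c^{(0)}(m)<0$ and $h_c^{(1)}(m)>0$), while the others change sign at explicit thresholds $c=c_j(m)\in[0,\tfrac12]$ that are rational functions of $m$; these thresholds cut $[0,\tfrac12]$ into finitely many subintervals on each of which $F(c,m):=c(1-c)\sum_j\binom{d}{j}|h_c^{(j)}(m)|$ is a genuine polynomial in $c$ and $m$, and on each one checks $F(c,m)\le C_d(m+2)\cdots(m+d-1)$. For $d=3$ these checks collapse pleasantly: on the subinterval containing $c=\tfrac12$ one gets $F(c,m)=6c(1-c)(m+1)$, reducing the claim to $4c(1-c)\le\frac{m+2}{m+1}$, which holds because $4c(1-c)\le1<\frac{m+2}{m+1}$; the other subintervals reduce, after the natural substitution, to one-variable polynomial inequalities on $[0,\tfrac12]$ (such as $4c(1-c)(2-3c)\le1$ and $8c(1-c)(3-5c)\le3$) settled by locating critical points, and one sees along the way that $\tfrac32$ cannot be replaced by a smaller constant (it is approached as $c\to\tfrac12$, $m\to\infty$).

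The main obstacle is the $d=4$ case. There are now five terms, each $h_c^{(j)}(m)$ is quadratic rather than linear in $m$, the sign-change loci $c_j(m)$ are roots of quadratics in $c$ and combine into a more intricate subdivision of $[0,\tfrac12]$, and the constant $\tfrac{4232}{2505}$ is not transparent the way $\tfrac32$ is. The delicate step is therefore the final optimization: on each subinterval one must bound the degree-two polynomial $F(c,m)$ above by $\tfrac{4232}{2505}(m+2)(m+3)$ uniformly in $m\ge1$, which forces one to control both the leading coefficient of $F$ in $m$ (equivalently $\lim_{m\to\infty}F(c,m)/\bigl((m+2)(m+3)\bigr)$) and the behaviour at small $m$, and then to verify that the worst case over all admissible $(c,m)$ is exactly $\tfrac{4232}{2505}$ rather than something larger; I expect the extremal $c$ to be algebraic, with $\tfrac{4232}{2505}$ a clean rational over-estimate of the true supremum.
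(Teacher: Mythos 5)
Your plan is essentially the paper's own proof: reduce via \eqref{fcj3} to bounding $c(1-c)\sum_{j}\binom{d}{j}|h_c^{(j)}(m)|$, compute the $h_c^{(j)}$ explicitly using the symmetry \eqref{eq: symmetry}, partition $[0,\tfrac12]$ according to the sign pattern of $(h_c^{(j)}(m))_j$, and bound the resulting polynomial on each piece. For the $d=4$ optimization you leave open, the paper writes the sum per sign pattern as $\frac{c(1-c)\bigl(\phi(m)c^2-\phi(m)c+\psi(m)\bigr)}{(m+1)(m+2)(m+3)}$, maximizes the quartic in $c$ to obtain $\frac{\psi(m)^2}{4\phi(m)(m+1)(m+2)(m+3)}$, and then checks this quantity times $(m+1)$ is decreasing in $m$ for $m\ge1$, so the extremum sits at $m=1$, where its value is exactly $\frac{4232}{2505}\cdot\frac{1}{m+1}$ (a sharp rational, not just an over-estimate of the case bound).
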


\begin{proof}[Proof of lemma \ref{limit} for $d=3$]
We will first determine the sign of $f_c^{(j)}(\frac{1}{m+3})-f_c^{(j)}(0)$. For $d = 3$, we calculate $h_c^{(j)}(m)$ explicitly as follows:
\begin{equation}\label{hcjm}
h_c^{(j)}(m)= 
    \begin{cases}
    (-3+3c)m+(-5+7c) & \text{if } j=0 \\
    (2-3c)m+(4-7c) & \text{if } j=1 \\
    \end{cases}
\end{equation}
Note that $h_c^{(2)}(m) = h_{1-c}^{(1)}(m)$ and $h_c^{(3)}(m) = h_{1-c}^{(0)}(m)$ from \eqref{eq: symmetry}.
Since $h_c^{(j)}(m)$ is linear in $m$, we can determine the sign as $0\le c\le1$.

\begin{table}[h!]
\begin{center}
\begin{tabular}{ c | c c c c c c c }
 $j$ & $0\le c\le \frac{2}{7}$ & $\frac{2}{7}\le c
 \le \frac{1}{3}$ & $\frac{1}{3}\le c
 \le \frac{3}{7}$ & $\frac{3}{7}\le c
 \le \frac{4}{7}$ & $\frac{4}{7}\le c
 \le \frac{2}{3}$ & $\frac{2}{3}\le c
 \le \frac{5}{7}$ & $\frac{5}{7}\le c
 \le 1$ \\
 \hline
 0 & $\le0$ & $\le0$ & $\le0$ & $\le0$ & $\le0$ & $\le0$ & ? \\  
 1 & $\ge0$ & $\ge0$ & $\ge0$ & $\ge0$ & ? & $\le0$ & $\le0$ \\ 
 2 & $\le0$ & $\le0$ & ? & $\ge0$ & $\ge0$ & $\ge0$ & $\ge0$ \\  
 3 & ? & $\le0$ & $\le0$ & $\le0$ & $\le0$ & $\le0$ & $\le0$ \\ 
\end{tabular}
\caption{The entry of the $j$-th row and the column $a\le c\le b$ is the sign of $h_c^{(j)}(m)$ for $a\le c\le b$ and $m\ge 0$. When there is a ?, it means that the sign depends on $m$. }
\end{center}
\end{table}

For example,  $j=1$ when $0\le c\le \frac{4}{7}$, the linear function $h_c^{(1)}(m)$ has positive leading coefficient since $2-3c\ge 2-3(\frac{4}{7})=\frac{2}{7}>0$ and hence, 
\[
h_c^{(1)}(m)\ge h_c^{(1)}(0)=4-7c\ge0, \quad \text{for } 0\le c\le \frac{4}{7}
\]

The possible signs of $(h_c^{(0)}(m),h_c^{(1)}(m),h_c^{(2)}(m),h_c^{(3)}(m))$ are given by \[
(h_c^{(0)}(m),h_c^{(1)}(m),h_c^{(2)}(m),h_c^{(3)}(m))
\begin{cases}
(\le0,\ge0,\le0,\ge0) & \text{only if } 0\le c\le\frac{2}{7}\\
(\le0,\ge0,\le0,\le0) & \text{only if } 0\le c\le\frac{3}{7}\\
(\le0,\ge0,\ge0,\le0) & \text{only if } \frac{1}{3}\le c\le\frac{1}{2}\\
\end{cases}
\]
We return to the estimation of the sum $\sum_{j = 0}^3 \binom{3}{j}|f_c^{(j)}(\frac{1}{m+3})-f_c^{(j)}(0)|$.

\paragraph{\textbf{Case 1}: $(h_c^{(0)}(m),h_c^{(1)}(m),h_c^{(2)}(m),h_c^{(3)}(m)) = (\le0,\ge0,\le0,\ge0)$ and $0\le c\le\frac{2}{7}$}
Note that for this case, using \eqref{hcjm} with its respective signs, we will obtain
$
\sum_{j = 0}^3 \binom{3}{j}|h_c^{(j)}(m)|=
(12-24c)m+(28-56c)
$.
Hence, using \eqref{fcj3}, we will get
\begin{equation} \label{7/3}
\begin{aligned} [b]
\sum_{j = 0}^3\binom{3}{j}|f_c^{(j)}(m)-f_c^{(j)}(0)|
=(12+\frac{4}{m+2})\frac{c(1-c)(1-2c)}{m+1}
\le \frac{7}{3\sqrt{3}}\frac{1}{m+1},
\end{aligned}
\end{equation}
where the last inequality follows from $0<\frac{4}{m+2}\le 2$ and $c(1-c)(1-2c)\le \frac{1}{6\sqrt{3}}$ 
since $0\le c\le \frac{2}{7}$ and $m\ge0$.

\paragraph{\textbf{Case 2}: $(h_c^{(0)}(m),h_c^{(1)}(m),h_c^{(2)}(m),h_c^{(3)}(m)) = (\le0,\ge0,\le0,\le0)$ and $0\le c\le\frac{3}{7}$}
Similarly, for this case, using \eqref{hcjm} with its respective signs, we will obtain
$
\sum_{j = 0}^3 \binom{3}{j}|h_c^{(j)}(m)|=
(6-18c)m+(24-42c)
$.
Hence, using \eqref{fcj3}, we will get
\begin{equation} \label{8/81}
\begin{aligned}
\sum_{j = 0}^3\binom{3}{j}|f_c^{(j)}(m)-f_c^{(j)}(0)|
& =6(1+\frac{2-c}{m+2})\frac{c(1-c)(1-3c)}{m+1}
 \le \frac{8(7\sqrt{7}-10)}{81}\frac{1}{m+1},
\end{aligned}
\end{equation}
where the last inequality follows from $0<\frac{2-c}{m+2}\le 1$ and $c(1-c)(1-3c)\le \frac{2(7\sqrt{7}-10)}{243} $ since $0\le c\le \frac{3}{7}$ and $m\ge0$.

\paragraph{\textbf{Case 3}: $(h_c^{(0)}(m),h_c^{(1)}(m),h_c^{(2)}(m),h_c^{(3)}(m)) = (\le0,\ge0,\ge0,\le0)$ and $\frac{1}{3}\le c\le\frac{1}{2}$}
Also for this case, using \eqref{hcjm} with its respective signs, we will obtain
$
\sum_{j = 0}^3 \binom{3}{j}|h_c^{(j)}(m)|=
6m+6
$.
Hence, using \eqref{fcj3}, we will get
\begin{equation} \label{3/2}
\begin{aligned}[b]
\sum_{j = 0}^3\binom{3}{j}|f_c^{(j)}(m)-f_c^{(j)}(0)|
& =\frac{c(1-c)}{(m+1)(m+2)}\left(6m+6\right)
& \le \frac{3}{2} \frac{1}{m+1}
\end{aligned}
\end{equation}
where the last inequality follows from $c(1-c)\le \frac{1}{4}$ since
$\frac{1}{3}\le c\le \frac{1}{2}$ and $m\ge0$.

Hence, comparing \eqref{7/3}, \eqref{8/81}, \eqref{3/2}, we can conclude that 
\[
\sum_{j = 0}^3\binom{3}{j}|f_c^{(j)}(m)-f_c^{(j)}(0)| \le \max\left\{\frac{7}{3\sqrt{3}},\frac{8(7\sqrt{7}-10)}{81},\frac{3}{2}\right\}\frac{1}{m+1}
=\frac{3}{2} \frac{1}{m+1}
\]
for $0\le c\le \frac{1}{2}$.
\end{proof}

\begin{proof}[Proof of lemma \ref{limit} for $d = 4$]
We will first determine the sign of $f_c^{(j)}(\frac{1}{m+4})-f_c^{(j)}(0)$. For $d = 4$, we calculate $h_c^{(j)}(m)$ explicitly as follows:
\begin{equation}\label{icjm}
h_c^{(j)}(m)= 
    \begin{cases}
    m^2(-6c^2+12c-6)+m(-37c^2+63c-26)+(-58c^2+78c-26) & \text{if } j=0 \\
    m^2(6c^2-9c+3)+m(37c^2-50c+15)+(58c^2-68c+18) & \text{if } j=1 \\
    m^2(-6c^2+6c-1)+m(-37c^2+37c-7)+(-58c^2+58c-12) & \text{if } j=2 \\
    \end{cases}
\end{equation}
Note that $h_c^{(3)}(m) = h_{1-c}^{(1)}(m)$ and $h_c^{(4)}(m) = h_{1-c}^{(0)}(m)$ from \eqref{eq: symmetry}.

Since $h_c^{(j)}(m)$ is quadratic in $m$, we can find the turning point of the quadratic given by $m_0 =-\frac{b}{2a}$ for any quadratic $am^2+bm+c$. From \eqref{eq: symmetry}, it is sufficient to determine the sign of $h_c^{(j)}(m)$ for $j=1,2,3$.
 \[ 
 m_0 =
  \begin{cases}
    \frac{26-37c}{12(c-1)} & \text{if } j=0 \\
    -\frac{37c^2-50c+15}{12c^2-18c+6} & \text{if } j=1 \\
    -\frac{37c^2-37c+7}{12c^2-12c+2} & \text{if } j=2 \\
    \end{cases}
\]

Using the turning point above we can obtain,
\[
h_c^{(j)}(m_0)=
\begin{cases}
\frac{1}{24} (-23 c^2 - 52 c + 52) & \text{if } j=0 \\
-\frac{-23 c^4 + 20 c^3 + 34 c^2 - 36 c + 9}{24 c^2 - 36 c + 12}  & \text{if } j=1 \\
\frac{-23 c^4 + 46 c^3 - 25 c^2 + 2 c + 1}{24 c^2 - 24 c + 4} & \text{if } j=2\\
\end{cases}
\]

We can solve the inequality $m_0(c) \ge 1$ for $0\le c<\frac{1}{2}$ as follows:
\[ m_0(c)\ge 1 \quad \text{if and only if}
  \begin{cases}
    \frac{38}{49}\le c\le 1 & \text{if } j=0 \\
    \frac{34}{49} - \frac{\sqrt{127}}{49} \le c < \frac{1}{2} & \text{if } j=1 \\
    \frac{3 - \sqrt(3)}{6}<c \le \frac{7 - \sqrt(13)}{14} & \text{if } j=2 \\
    \end{cases}
\]
For the case $j=0$, 
the leading coefficient of $h_c^{(0)}(m)$ is negative for $0\le c\le \frac{1}{2}$.

\textbf{Case A}: $0 \le c \le \frac{1}{2}$. Since the leading coefficient is negative, hence $h_c^{(0)}(m)$ is is decreasing in $m$ for $m\ge m_0$. In this case $m_0\le 1$, hence $h_c^{(0)}(m)\ge h_c^{(0)}(1)$ where $h_c^{(0)}(1)$ can be solved to be negative for $0\le c\le \frac{1}{2}$.

Next, for $j=1$, 
the leading coefficient of $h_c^{(1)}(m)$ is positive for $0\le c\le \frac{1}{2}$. 

\textbf{Case $B_1$}: 
$0\le c\le \frac{127-\sqrt{1585}}{212}$. Since the leading coefficient is positive, hence $h_c^{(1)}(m)$ is increasing in m for $m\ge m_0$. In this case,  $m_0\le 1$, hence for all $m\ge 1$,
\[
h_c^{(1)}(1) \le h_c^{(1)}(m) = 101 c^2 - 127 c + 36 \ge 0,
\]
where the last inequality holds since $ c\le \frac{127-\sqrt{1585}}{212}$.

\textbf{Case $B_2$}:
$\frac{34-\sqrt{127}}{49}\le c< \frac{1}{2} $. Since the leading coefficient is positive, and in this case,  $m_0\ge 1$, hence for all $m\ge 1$,
$h_c^{(1)}(m_0) \le h_c^{(1)}(m)$ where  $h_c^{(1)}(m_0) \le -\frac{3(215\sqrt{127}-1497)}{2401} < 0$ for $\frac{34-\sqrt{127}}{49}\le c< \frac{1}{2} $.

Lastly, for $j=2$, 
the leading coefficient of $h_c^{(2)}(m)$ is negative for $0\le c\le \frac{1}{6}(3-\sqrt{3})$ and positive for $\frac{1}{6}(3-\sqrt{3})\le c\le \frac{1}{2}$. 

\textbf{Case $C_1$}: 
$0\le c< \frac{1}{6}(3-\sqrt{3})$. Since the leading coefficient is negative, hence $h_c^{(2)}(m)$ is decreasing in $m$ for $m\ge m_0$. In this case,  $m_0\le 1$, hence for all $m\ge 1$, 
\[
h_c^{(2)}(m) \le h_c^{(2)}(1) =  -101 c^2 + 101 c - 20 \le -\frac{19}{6} < 0,
\] where the second last inequality holds since $ c\le \frac{1}{6}(3-\sqrt{3})$.

\textbf{Case $C_2$}:
$\frac{1}{6}(3-\sqrt{3})< c\le \frac{1}{2}-\frac{\sqrt{13}}{14}$. Since the leading coefficient is positive, hence $i_c^{(2)}(m)$ is increasing in m for $m\ge m_0$. In this case,  $m_0\ge 1$, hence for all $m\ge 1$,
$h_c^{(2)}(1) \le h_c^{(2)}(m)$. For $\frac{1}{6}(3-\sqrt{3})\le c\le \frac{7-\sqrt{13}}{14}$, $ h_c^{(2)}(1) \le - \frac{71}{49} < 0$.

\textbf{Case $C_3$}:
$\frac{7-\sqrt{13}}{14}\le c < \frac{1}{2}$. Since the leading coefficient is positive and in this case,  $m_0\le 1$, hence $h_c^{(2)}(m_0) \le h_c^{(2)}(m)$ where $h_c^{(2)}(m_0)$ is solved to be $ h_c^{(2)}(m_0)\le -\frac{1}{32} < 0$ for $\frac{7-\sqrt{13}}{14}\le c < \frac{1}{2}$.

\begin{table}[h!]
\begin{center}
\begin{tabular}{ c | c c c c c c c}
 $j$ &
 $0\le c< \frac{3-\sqrt{3}}{6}$ & 
 $\frac{3-\sqrt{3}}{6}< c\le \frac{127-\sqrt{1585}}{202}$ & 
 $\frac{127-\sqrt{1585}}{202}\le c< \frac{1}{2}$ & 
 $c=\frac{1}{2}$\\
 \hline
 0 & $\le0$ (A) & $\le0$ (A) & $\le0$ (A) & $\le0$ (A)\\  
 1 & $\ge0$ ($B_1$) & $\ge0$ ($B_1$)& ? & $\le0$ ($B_2$)\\ 
 2 & $\le0$ ($C_1$) & ? & ? & $\ge0$ \\ 
\end{tabular}
\caption{The entry of the $j$-th row and the column(range of $c$) is the sign of $h_c^{(j)}(m)$ for that range. When there is a ?, it means that the sign depends on $m$. }
\end{center}
\end{table}

Hence the possible signs of $(h_c^{(0)}(m),h_c^{(1)}(m),h_c^{(2)}(m),h_c^{(3)}(m),h_c^{(4)}(m))$ are given by \[
\begin{cases}
(\le0,\ge0,\le0,\ge0,\le0) & \text{only if } 0\le c<\frac{1}{2}\\
(\le0,\ge0,\ge0,\ge0,\le0) & \text{only if } \frac{3-\sqrt{3}}{6}\le c<\frac{1}{2}\\
(\le0,\le0,\le0,\le0,\le0) & \text{only if } \frac{127-\sqrt{1585}}{202}\le c<\frac{1}{2}\\
(\le0,\le0,\ge0,\le0,\le0) & \text{only if } \frac{127-\sqrt{1585}}{202}\le c\le\frac{1}{2}\\
\end{cases}
\]

\begin{equation}\label{psi}
\sum_{j = 0}^4 \binom{4}{j}|f_c^{(j)}(m)-f_c^{(j)}(0)|
= \frac{c(1-c)(\phi(m) c^2 - \phi(m) c + \psi(m)) }{(m+1)(m+2)(m+3)}
 \le \frac{( \psi(m) )^2 }{4\phi(m)(m+1)(m+2)(m+3)} ,
\end{equation}
Let $g(c):= c(1-c)(\phi(m) c^2 - \phi(m) c + \psi(m)) $, where $\phi(m) \psi(m)\ge 0$ and $\phi(m)(\phi(m) - 2\psi(m)) \ge 0$. The above inequality follows since $g(c)\le g(\frac{\phi(m)-\sqrt{\phi(m)(\phi(m) -2 \psi(m))}}{2\phi(m)}) = \frac{\psi(m)^2}{4(\phi(m)}$.

\textbf{Case 1}: $(\le0,\ge0,\le0,\ge0,\le0)$. 
In this case, $\phi(m)= 96m^2 +592m + 982 $ and $\psi(m) = 24m^2 +136m +208$.
From \eqref{psi},
\begin{equation}\label{4232/2505}
\sum_{j = 0}^4 \binom{4}{j}|f_c^{(j)}(m)-f_c^{(j)}(0)|
= \frac{8 (3 m^2 + 17 m + 26)^2}{((m+2)(m + 3) (48 m^2 + 296 m + 491)} \cdot \frac{1}{m+1}
\le\frac{4232}{2505}\frac{1}{m+1}.
\end{equation}
This inequality holds since $\frac{8 (3 m^2 + 17 m + 26)^2}{((m+2)(m + 3) (48 m^2 + 296 m + 491)}$ is decreasing in $m$ for $m\ge 1$ so that $\frac{8 (3 m^2 + 17 m + 26)^2}{((m+2)(m + 3) (48 m^2 + 296 m + 491)}\le \frac{8 (3 \cdot 1^2 + 17 \cdot 1 + 26)^2}{((1+2)(1 + 3) (48 \cdot 1^2 + 296 \cdot 1 + 491)}=\frac{4232}{2505}$.

\textbf{Case 2}: $(\le0,\ge0,\ge0,\ge0,\le0)$.
In this case, $\phi(m)= 24m^2 +128m + 232 $ and $\psi(m) = 12m^2 +52m +64$.
From \eqref{psi},
\begin{equation}\label{8/9}
\begin{aligned} 
&\sum_{j = 0}^4 \binom{4}{j}|f_c^{(j)}(m)-f_c^{(j)}(0)|
& \le \frac{8}{9} \frac{1}{m+1}.
\end{aligned}
\end{equation}

\textbf{Case 3}: $(\le0,\le0,\le0,\le0,\le0)$ 
In this case, 
\begin{equation}\label{0}
    \sum_{j = 0}^4 \binom{4}{j}|f_c^{(j)}(m)-f_c^{(j)}(0)|= 0.
\end{equation}

\textbf{Case 4}: $(\le0,\le0,\ge0,\le0,\le0)$.
In this case,  
\begin{equation}
\begin{aligned} \label{3/2'}
&\sum_{j = 0}^4 \binom{4}{j}|f_c^{(j)}(m)-f_c^{(j)}(0)|\\
& = \frac{c(1-c)(c^2(-72m^2-444m-696)+c(72m^2+444m+696)-(12m^2+84m+144))}{(m+1)(m+2)(m+3)}\\
& =  \frac{12c(1-c)(-c^2(6+\frac{7}{m+2}+\frac{1}{(m+2)(m+3)})+c(6+\frac{7}{m+2}+\frac{1}{(m+2)(m+3)})-(1+\frac{2}{m+2}))}{m+1}\\
& < \frac{3}{2} \frac{1}{m+1},
\end{aligned}
\end{equation}
where the last inequality holds since  $(-c^2(6+\frac{7}{m+2}+\frac{1}{(m+2)(m+3)})+c(6+\frac{7}{m+2}+\frac{1}{(m+2)(m+3)})-(1+\frac{2}{m+2}))\le \frac{2m+5}{4m+12} < \frac{1}{2}$  and $c(1-c)\le \frac{1}{4}$ at $c=\frac{1}{2}$, the inequality is true.

Hence, comparing \eqref{4232/2505}, \eqref{8/9}, \eqref{0},\eqref{3/2'}, we can conclude that 
\[
\sum_{j = 0}^4\binom{4}{j}|f_c^{(j)}(m)-f_c^{(j)}(0)| \le \max\left\{\frac{4232}{2505},\frac{8}{9},0,\frac{3}{2}\right\}\frac{1}{m+1}
=\frac{4232}{2505} \frac{1}{m+1}
\]
for $0\le c\le \frac{1}{2}$.
\end{proof}

\begin{lemma}\label{1-c}
For $0\le c\le 1$ and each $d\in \Z_{\ge0}$
\[\sum_{j=0}^d \binom{d}{j}|f_c^{(j)}(\frac{1}{m+d})-f_c^{(j)}(0)| = 
\sum_{j=0}^d \binom{d}{j}|f_{1-c}^{(j)}(\frac{1}{m+d})-f_{1-c}^{(j)}(0)|
\]
\end{lemma}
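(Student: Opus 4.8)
The plan is to prove Lemma~\ref{1-c} purely from the functional symmetry $f_c^{(j)}(x) = f_{1-c}^{(d-j)}(x)$ noted after \eqref{fcj3}, together with the fact that the sum over $j$ is taken over the full range $0 \le j \le d$. The key observation is that both sums have the same set of summands, merely listed in reverse order of $j$.

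First I would record the identity $f_c^{(j)}(x) = f_{1-c}^{(d-j)}(x)$ for all $0 \le j \le d$ and all $x$ in the domain, which follows directly from \eqref{eq: fcx} by the substitution $c \mapsto 1-c$ and $j \mapsto d-j$ (the numerator factors $c(c-x)\cdots$ and $(1-c)(1-c-x)\cdots$ swap roles, while the denominator is unchanged). Evaluating at $x = \tfrac{1}{m+d}$ and at $x = 0$ and subtracting gives
\[
f_c^{(j)}\Big(\tfrac{1}{m+d}\Big) - f_c^{(j)}(0) = f_{1-c}^{(d-j)}\Big(\tfrac{1}{m+d}\Big) - f_{1-c}^{(d-j)}(0),
\]
so in particular the absolute values agree:
$\big|f_c^{(j)}(\tfrac{1}{m+d}) - f_c^{(j)}(0)\big| = \big|f_{1-c}^{(d-j)}(\tfrac{1}{m+d}) - f_{1-c}^{(d-j)}(0)\big|$.

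Next I would multiply by $\binom{d}{j} = \binom{d}{d-j}$ and sum over $j = 0, \ldots, d$. On the left this is $\sum_{j=0}^d \binom{d}{j}\big|f_c^{(j)}(\tfrac{1}{m+d}) - f_c^{(j)}(0)\big|$; on the right, substituting $k = d - j$ as the summation index (which also ranges over $0, \ldots, d$) turns it into $\sum_{k=0}^d \binom{d}{k}\big|f_{1-c}^{(k)}(\tfrac{1}{m+d}) - f_{1-c}^{(k)}(0)\big|$, which is exactly the right-hand side of the claimed identity. This completes the argument.

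There is essentially no obstacle here: the lemma is a bookkeeping consequence of the symmetry already established for $h_c^{(j)}$ in \eqref{eq: symmetry}, lifted back to the level of the $f_c^{(j)}$ differences themselves. The only point requiring a word of care is to note that the reindexing $j \mapsto d-j$ is a bijection of $\{0, 1, \ldots, d\}$ onto itself, so no summands are gained or lost; and that $\binom{d}{j}$ is invariant under this reindexing. It may be worth remarking that this lemma is what justifies restricting attention to $0 \le c \le \tfrac{1}{2}$ in Lemma~\ref{limit}: the bound for $\tfrac{1}{2} \le c \le 1$ follows by applying the $c \le \tfrac12$ case to $1-c$.
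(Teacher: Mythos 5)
Your proof is correct and is essentially the same as the paper's: both derive the symmetry $f_c^{(j)}(x) = f_{1-c}^{(d-j)}(x)$ from \eqref{eq: fcx}, apply it at $x = \tfrac{1}{m+d}$ and $x = 0$, take absolute values, and reindex the sum via $j \mapsto d-j$ using $\binom{d}{j} = \binom{d}{d-j}$. The one small point in your favour is that you state the substitution correctly as $j \mapsto d - j$, whereas the paper's prose (``substituting $c$ with $1-c$, and $d$ with $d-j$'') contains a slip.
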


\begin{proof}
For $j = 0,\ldots, d$, by \eqref{eq: symmetry}, 
we have $
    f_c^{(j)}(\frac{1}{m+d})-f_c^{(j)}(0) = f_{1-c}^{(d -j)}(\frac{1}{m+d})-f_{1-c}^{(d-j)}(0)
$.
Hence
\begin{align*}
    \sum_{j=0}^d \binom{d}{j}|f_c^{(j)}(\frac{1}{m+d})-f_c^{(j)}(0)|
    &= \sum_{j=0}^d \binom{d}{j}|f_{1-c}^{(d -j)}(\frac{1}{m+d})-f_{1-c}^{(d-j)}(0)| \\
         &= \sum_{j=0}^d \binom{d}{j}|f_{1-c}^{(j)}(\frac{1}{m+d})-f_{1-c}^{(j)}(0)|,
\end{align*}
where the last equality follows by replacing the index $j$ with $d - j$ and using the identity $\binom{d}{d-j} = \binom{d}{j}$. 
\end{proof}

\begin{corollary}\label{c>1/2}
For $d=3$ or $4$, let $f_c^{(j)}(m)$ be as in \eqref{eq: fcx}. 
For each $m \ge 1$ and $0 \le c \le 1$,
\[
\sum_{j = 0}^d \binom{d}{j}|f_c^{(j)}(\frac{1}{m+d})-f_c^{(j)}(0)|\le \frac{C_d}{m + 1},
\]
where $C_3=\frac{3}{2}$ and $C_4 = \frac{4232}{2505}$ as given in table \ref{improved}.
\end{corollary}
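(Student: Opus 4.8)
The plan is to split the interval $[0,1]$ at its midpoint and reduce the case $c\in[\frac12,1]$ to the case $c\in[0,\frac12]$, which is precisely Lemma~\ref{limit}, using the reflection symmetry recorded in Lemma~\ref{1-c}.

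First, if $0\le c\le\frac12$, the desired bound $\sum_{j=0}^d\binom{d}{j}|f_c^{(j)}(\frac{1}{m+d})-f_c^{(j)}(0)|\le\frac{C_d}{m+1}$ is exactly the statement of Lemma~\ref{limit}, so there is nothing to prove in that range. Second, suppose $\frac12\le c\le 1$, and set $c':=1-c$, so that $0\le c'\le\frac12$. By Lemma~\ref{1-c},
\[
\sum_{j=0}^d\binom{d}{j}\Bigl|f_c^{(j)}\bigl(\tfrac{1}{m+d}\bigr)-f_c^{(j)}(0)\Bigr|
=\sum_{j=0}^d\binom{d}{j}\Bigl|f_{c'}^{(j)}\bigl(\tfrac{1}{m+d}\bigr)-f_{c'}^{(j)}(0)\Bigr|,
\]
and the right-hand side is at most $\frac{C_d}{m+1}$ by Lemma~\ref{limit} applied to $c'$ (which is legitimate since $0\le c'\le\frac12$ and the hypothesis $m\ge 1$ is unchanged). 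Combining the two ranges yields the inequality for all $0\le c\le1$, which is the assertion of the corollary.

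Since both ingredients are already established, I do not expect any genuine obstacle; the only points to check are that the two ranges overlap consistently at $c=\frac12$ (which they do, because Lemma~\ref{1-c} is an identity there), and that the substitution $c\mapsto 1-c$ leaves the variable $m$ and the constant $C_d$ untouched so that the bound transports verbatim.
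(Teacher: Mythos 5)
Your proposal is correct and matches the paper's own argument: both handle $0\le c\le\frac12$ directly via Lemma~\ref{limit} and then transport the bound to $\frac12\le c\le 1$ using the symmetry identity of Lemma~\ref{1-c} with $c\mapsto 1-c$. Your write-up is in fact slightly more careful than the paper's (which only spells out the $d=3$ case explicitly), but there is no substantive difference in the approach.
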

\begin{proof}
From lemma \ref{limit}, for $c\le\frac{1}{2}$ we have $\sum_{j = 0}^3\binom{3}{j}|f_c^{(j)}(m)-f_c^{(j)}(0)| \le \frac{3}{2} \frac{1}{m+1}$  Using lemma \ref{1-c}, we can show that for $1-c\le \frac{1}{2}$, $\sum_{j=0}^d \binom{d}{j}|f_c^{(j)}(\frac{1}{m+d})-f_c^{(j)}(0)| =\sum_{j = 0}^3\binom{3}{j}|f_{1-c}^{(j)}(m)-f_{1-c}^{(j)}(0)| \le \frac{3}{2} \frac{1}{m+1}$, where the last inequality follows from lemma \ref{limit} (with $c$ replaced by $1-c$).
\end{proof}

\begin{proof}[Proof of theorem \ref{mainthm}]
Let $P(x)\in \R[x]$ be a polynomial of degree d. We may assume $d=3$ or $4$ since Theorem \ref{mainthm} follows Power-Reznick's bound for $d=1$ and $2$ from \eqref{reznick bound}. By lemma \ref{lem:identity} and the definition of $L(P)$ in \eqref{eq: LPAndLambdaP},
\begin{align*}
   & |[x^{c(m+d)}](1+x)^mP(x) - \binom{m+d}{c(m+d)}(1-c)^dP(\frac{c}{1-c})| \\
        &\le \binom{m+d}{c(m+d)} L(P)\sum_{j = 0}^d \binom{d}{j}|f_c^{(j)}(\frac{1}{m+d})-f_c^{(j)}(0)| \\
        &\le \binom{m+d}{c(m+d)} L(P)\frac{C_d}{m+1},
\end{align*}
where the last line follows from corollary \ref{c>1/2} for $m\ge 1$.
Hence, 
\begin{align*}
[x^{c(m+d)}](1+x)^mP(x) &\ge \binom{m+d}{c(m+d)}\left((1-c)^dP(\frac{c}{1-c}) - L(P)\frac{C_d}{m+1}\right) \\
&\ge \binom{m+d}{c(m+d)}\left(\lambda(P) - L(P)\frac{C_d}{m+1}\right),
\end{align*}
where the last inequality is true since  $ \lambda(P) \le (1-c)^dP(\frac{c}{1-c}) $ by taking $x=\frac{c}{1-c}$ in the definition of $\lambda(P)$ in \eqref{eq: LPAndLambdaP}.
Thus $[x^{c(m+d)}](1+x)^mP(x)$ is positive whenever $m > C_d \frac{L(P)}{ \lambda(P)} - 1$ and $m \ge 1$. But $m > C_d \frac{L(P)}{\lambda(P)} -1 \ge \frac{1}{2}$ by $C_d \ge \frac{3}{2}$ and $\frac{L(P)}{\lambda(P)} \ge 1$ from \eqref{inequality} and hence $m \ge 1$ since $m$ is an integer. So $m > C_d \frac{L(P)}{ \lambda(P)} - 1$ is the only condition required. Therefore, by the equivalent condition given in \eqref{eq: equivalent criterion}, all the coefficients of $(1+x)^mP(x)$ are positive for such $m$.
\end{proof}

\section{Future work}

We will run a different method to prove lemma \ref{limit} for general $d$. Let $f_c^{(j)}(m)$ be as in \eqref{eq: fcx}. 
For each positive integer $d$, we wish to find a constant $C_d >0 $ such that, for each $m \ge 1$ and $0 \le c \le \frac{1}{2}$,
\begin{equation} \label{lem: essentialLemma}
\sum_{j = 0}^d \binom{d}{j}|f_c^{(j)}(\frac{1}{m+d})-f_c^{(j)}(0)|\le \frac{C_d}{m + 1},
\end{equation}

\begin{proof}
Since $f_c^{(j)}(x)$ is a rational function whose numerator is a polynomial of degree at most $(d - 1)$ and whose denominator is a polynomial of degree $d - 1$, by the theory of partial fractions,
\begin{equation} \label{eq: partialFractionDecomposition}
f_c^{(j)}(x) = \gamma_c^{(j)} + \sum_{r = 1}^{d - 1} \frac{\alpha_c^{(j)}(r)}{1 - rx},
\end{equation}
where
\begin{equation} \label{eq: defineAlphacjr}
    \alpha_c^{(j)}(r) = \frac{(-1)^{d - r - 1} d}{r^2 \binom{d}{j}} \binom{cr}{j} \binom{(1 - c) r}{d - j} \binom{d - 1}{r}.
        \quad {\text{ for }} r = 1 ,\ldots, d - 1.
\end{equation}
The value of the constant term $\gamma_c^{(j)}$ will not concern us here.
Since $f_c^{(j)}(x)-f_c^{(j)}(0)=\sum_{r=1}^{d-1}\alpha_c^{(j)}(r)(\frac{1}{1-rx}-1) = x\sum_{r=1}^{d-1}\alpha_c^{(j)}(r)(\frac{r}{1-rx})$, hence for $x \ge 0$,
\[
|f_c^{(j)}(x)-f_c^{(j)}(0)|\le x\sum_{r=1}^{d-1}|\alpha_c^{(j)}(r)|(\frac{r}{1-rx})
\]
Hence
\[
\sum_{j = 0}^d \binom{d}{j}|f_c^{(j)}(x)-f_c^{(j)}(0)|\le 
 c(1-c)x\sum_{r=1}^{d-1}\Modulus{\frac{r}{1 - rx}} Q_c(r)
    \quad \text{where }Q_c(r):=\sum_{j=0}^d \binom{d}{j} \frac{|\alpha_c^{(j)}(r)|}{c(1-c)}.
\]
Since $Q_c(r)\ge0$ and $\frac{r}{1-rx}$ is increasing in $x$ for $x \le \frac{1}{d} < \frac{1}{r}$, for $r=1,\ldots, d-1$, hence, for $0\le x\le \frac{1}{d}$,
\begin{equation}\label{bound}
\sum_{j = 0}^d \binom{d}{j}|f_c^{(j)}(x)-f_c^{(j)}(0)|\le c(1-c)x\sum_{r=1}^{d-1}\frac{dr}{d-r}Q_c(r).
\end{equation}

For $d = 5$, it can be shown numerically that, for $0\le c \le \frac{1}{2}$,
\[
 c(1-c)\sum_{r=1}^{5-1}\frac{5r}{5-r}Q_c(r) < 16.5
\]
Hence, together with \eqref{bound}, we can take $C_5 = 16.5$ in \eqref{lem: essentialLemma}.
\end{proof}



Degree bounds for P\'olya's positvstellensatz for both low degree ,$d$, and general $d$ have various applications to different fields. We have chose to work on small $d$ for its use in optimization control theory. One example is that $d=2$ is used to find the stability number of a graph. On the other hand, general $d$ have various applications to the complexity of archimedean positivstellensatze. For example, Schweighofer showed that a degree bound for P\'olya's positvstellensatz implies a corrosponding degree bound for Schm\"udgen's positivstellensatz \cite{Schweighofer}, 
and Nie-Schweighofer showed that a degree bound for P\'olya's positvstellensatz implies a corrosponding degree bound for Putinar's positivstellensatz \cite{Putinar}.
There are also applications of degree bounds for P\'olya positvstellensatz to estimate the rate of convergence of hierarchy of lower bounds that converge to infimum of fixed degree polynomials on the simplex \cite{Ahmadi}.

\paragraph{\textbf{Acknowledgements}}
Thank you Dr Colin Tan
for guiding me through this project. Special thanks to 
Mr Lim Teck Choow,
Professor Wing-Keung To and 
Professor CheeWhye Chin for their help.
An earlier version of this report was submitted to Singapore Mathematical Medley under the title \lq\lq Complexity of P\'olya Positivstellensatz for polynomial of low degree \rq\rq.

\end{document}